\numberwithin{equation}{section}
\newtheorem{theorem}{Theorem}[section]
\theoremstyle{definition}
\theoremstyle{remark}
\newtheorem{remark}[theorem]{Remark}
\begin{document}
\centerline{\bf \large Behavior of Solutions to An Initial Boundary Value Problem}
\centerline{\bf\large for a Hyperbolic System With Relaxation} 
\vskip 0.2cm
\centerline{ CEN Luyu, LIN Lu, LIU Jiyuan, XIAO Yujie}
\centerline{City University of Hong Kong}
\vskip 0.2cm
{\bf Abstract}: The behavior of solutions to an initial boundary value problem 
for a hyperbolic system with relaxation is studied when the relaxation parameter is small, by using the method of
 Fourier Series and the  energy method. 
 \section{Introduction}
 
 In this paper, we consider the initial boundary value problem for the following hyperbolic system with relaxation: 
\begin{equation} \label{eq:a}
    \begin{array}{ccl}
    u_{t}+v_{x}=0\\
    v_{t}+a^2u_{x}=\frac{bu-v}{\epsilon}\\
    \end{array}
\end{equation}
with the boundary conditions:
\begin{equation}
     u(0,t) = u(1,t) = 0, 
\end{equation}
and the initial conditions:
\begin{equation} \label{initial:1}
     u(x,0) = f(x)
\end{equation}
\begin{equation} \label{initial:2}
     v(x,0) = g(x)
\end{equation}
where $a$ and $b$ are constants and $a>0$. $\epsilon>0$ is the relaxation parameter.  We assume that 
\begin{equation}\label{subcharacteristic}
|b|<a, 
\end {equation}
such that the so called ``subcharacteristic condition'' is satisfied.  
System \eqref{eq:a} can be transformed to :
\begin{equation} \label{eq:1}
     u_{tt}-a^2u_{xx}+\frac{1}{\epsilon}
    \left(bu_{x}+u_{t}\right) = 0
\end{equation}
with the  boundary conditions: 
\begin{equation}\label{bc}
     u(0,t) = u(1,t) = 0, 
\end{equation}
and the initial conditions:
\begin{equation} \label{initial}
     u(x,0) = f(x), \ u_t(x, 0)=-g'(x).
\end{equation}
In order to match the initial and boundary conditions, we require that
\begin{equation}\label{match}
f(0)=f(1),  g'(0)=g'(1)=0.\end{equation}

Formally, as $\epsilon\to 0$, system \eqref{eq:a} is relaxed to the equilibrium 
\begin{equation}\label{equi}
u_t+bu_x=0,\end{equation}
\begin{equation}\label{equi1}
v=bu.\end{equation}

System \eqref{eq:a} is related to a general relaxation system of Jin-Xin model \cite{jin1995relaxation}, for which the asymptotic behavior as the relaxation parameter tends to zero of solutions to the initial value problem was discussed in \cite{bianchini2006hyperbolic, natalini1996convergence, tadmor1999pointwise, teng1998first}. The asymptotic behavior as the relaxation parameter tends to zero for the initial boundary problems  in a quarter plane in $(x, t)$ was discussed in \cite{wang1998asymptotic, xin2000stiff, xin2002initial} with one boundary $x=0$. In this paper, we are interested in the behavior of solutions in a $(x, t)$ -strip, $0\le x\le 1$, $t\ge 0$. It should be noted that the Fourier-Laplace transformation is used in \cite{xin2000stiff} to study the problem in a quarter plane. For the 
problem in a strip studied in this paper, we have two boundaries, $x=0$ and $x=1$. This is the main difference of the problem studied in this paper, compared with those in a quarter plane. For example, the  Fourier-Laplace transformation is not applicable to our problem any more. Instead, we use the Fourier series method. \\

The boundary layer behavior is the main concern of this paper, which is shown by the Fourier series solution. We also give the numerical 
simulation based on our Fourier series solution, and justify the zero relaxation limit by the asymptotic expansion with the boundary layer equations and the energy method. \\

Relaxation phenomena are important in many physical situations. For more background, please refer to \cite{chen1994hyperbolic, liu1987hyperbolic}.

\section{Fourier Series Solution.} 
\subsection{Solution Formula}
\begin{theorem}\label{thm2.1} The Fourier series  solution $u(x, t)=\sum_{n=1}^{\infty}T_n(t)X_n(x)$ of problem \eqref{eq:1}, \eqref{bc} and \eqref{initial} is given as follows : \\
i) If  \( \sqrt{\frac{a^2-b^2}{4\epsilon^2a^4\pi^2}}\) is not an integer,
\begin{align} \label{u}
u(x,t) =&\sum_{n=1}^{k}e^{\frac{b}{2a^2\epsilon}x}\sin(n\pi x)\left(c_n e^{\alpha_{n-}t} + d_n e^{\alpha_{n+}t}\right)\notag \\ 
&+ \sum_{n=k+1}^{\infty}e^{\frac{b}{2a^2\epsilon}x-\frac{1}{2\epsilon}t}\sin(n\pi x)
\left[ c_n \cos(\beta_n t)+ d_n \sin(\beta_n t)\right]
\end{align}
where \(k=\lfloor\sqrt{\frac{a^2-b^2}{4\epsilon^2a^4\pi^2}}\rfloor \), 
\begin{align}
c_n&= \int_{0}^{1} f(x)e^{ - \frac{b}{2a^2\epsilon}x}\sin(n\pi x)dx -
          \frac{2\epsilon}{\sqrt{\left(1 - \frac{b^2}{a^2} \right)-4a^2n^2\epsilon^2\pi^2}}
          \int_{0}^{1}\left(\frac{f(x)}{2\epsilon} - g'(x)\right) e^{- \frac{b}{2a^2\epsilon}x} \sin(n\pi x)dx\label{cn1},\\
d_n&= \int_{0}^{1} f(x)e^{ - \frac{b}{2a^2\epsilon}x}\sin(n\pi x)dx +
          \frac{2\epsilon}{\sqrt{\left(1 - \frac{b^2}{a^2} \right)-4a^2n^2\epsilon^2\pi^2}}
          \int_{0}^{1}\left(\frac{f(x)}{2\epsilon} - g'(x)\right) e^{- \frac{b}{2a^2\epsilon}x} \sin(n\pi x)dx\label{dn1},
\end{align}
\begin{equation}\label{alpha}
\alpha_{n\pm}=\frac{-1\pm\sqrt{\left(1-\frac{b^2}{a^2}\right)-4a^2n^2\epsilon^2 \pi^2}}
{2\epsilon},
\end{equation}
for $n \leq  k$, and 
\begin{align}
c_n&= 2\int_{0}^{1} f(x)e^{ - \frac{b}{2a^2\epsilon}x}\sin(n\pi x)dx \label{cn2}\\
d_n&= \frac{4\epsilon}{\sqrt{\left( \frac{b^2}{a^2} -1\right) + 4a^2n^2\epsilon^2\pi^2}}
          \int_{0}^{1}\left(\frac{f(x)}{2\epsilon} - g'(x)\right) e^{- \frac{b}{2a^2\epsilon}x} \sin(n\pi x)dx\label{dn2},
\end{align} 
\begin{equation}
\beta_n = \frac{\sqrt{\left(\frac{b^2}{a^2}-1\right)+4a^2n^2\epsilon^2\pi^2}}{2\epsilon}
\end{equation}
for $n\geq k+1$.\\
ii) If  \( \sqrt{\frac{a^2-b^2}{4\epsilon^2a^4\pi^2}}\) is an integer, let \(k=\sqrt{\frac{a^2-b^2}{4\epsilon^2a^4\pi^2}}.\) Then
\begin{align} 
u(x,t) &=\sum_{n=1}^{k-1}e^{\frac{b}{2a^2\epsilon}x}\sin(n\pi x)\left(c_n e^{\alpha_{n-}t}+d_n e^{\alpha_{n+}t}\right)\notag\\
&+ e^{\frac{b}{2a^2\epsilon}x-\frac{1}{2\epsilon}t}\sin(k\pi x)\left[2\int_{0}^{1}f(x)e^{\frac{-b}{2a^2\epsilon}x}\sin(k\pi x)dx+2t\int_{0}^{1}\left(\frac{f(x)}{\epsilon}-g'(x)\right)e^{2a^2\epsilon}x \sin(k\pi x)dx  \right]\notag\\
&+  \sum_{n=k+1}^{\infty}e^{\frac{b}{2a^2\epsilon}x-\frac{1}{2\epsilon}t}\sin(n\pi x)\left[c_n \cos (\beta_n t) + d_n\sin(\beta_n t)\right]
    \end{align} 
where \(c_n, d_n\) and $\beta_n$ are the same as in case i).

\end{theorem}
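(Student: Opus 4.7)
The plan is to reduce \eqref{eq:1} to a form without first-order spatial derivative via a gauge transformation, and then apply separation of variables. I would substitute $u(x,t) = e^{\frac{b}{2a^2\epsilon}x} w(x,t)$ into \eqref{eq:1}; a short calculation shows that the choice of exponent $\frac{b}{2a^2\epsilon}$ is precisely what is needed to kill the $w_x$ term, and the equation for $w$ becomes
\[
w_{tt} - a^2 w_{xx} + \frac{1}{\epsilon}\, w_t + \frac{b^2}{4a^2\epsilon^2}\, w = 0,
\]
with the same homogeneous Dirichlet conditions $w(0,t)=w(1,t)=0$ (since the gauge factor is nonzero) and transformed initial data $w(x,0)=e^{-\frac{b}{2a^2\epsilon}x}f(x)$ and $w_t(x,0)=-e^{-\frac{b}{2a^2\epsilon}x}g'(x)$.

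Separation of variables $w(x,t) = X(x)T(t)$ in this reduced equation, together with the Dirichlet conditions, leads to the Sturm--Liouville problem $X'' + \mu^2 X = 0$, $X(0)=X(1)=0$, with eigenvalues $\mu_n = n\pi$ and eigenfunctions $X_n(x) = \sin(n\pi x)$. The temporal factor then satisfies
\[
T_n'' + \frac{1}{\epsilon}\,T_n' + \Big(a^2 n^2\pi^2 + \frac{b^2}{4 a^2 \epsilon^2}\Big) T_n = 0,
\]
whose characteristic roots agree exactly with the $\alpha_{n\pm}$ of \eqref{alpha}. The discriminant $\Delta_n := (1 - b^2/a^2) - 4 a^2 n^2 \pi^2 \epsilon^2$ is positive precisely when $n < \sqrt{(a^2-b^2)/(4\epsilon^2 a^4 \pi^2)}$, so I split the construction into three regimes: (a) for $n \le k$, $\Delta_n > 0$ gives distinct real roots and $T_n(t) = c_n e^{\alpha_{n-}t} + d_n e^{\alpha_{n+}t}$; (b) for $n \ge k+1$, $\Delta_n < 0$ gives complex conjugate roots $-\tfrac{1}{2\epsilon} \pm i\beta_n$ and $T_n(t) = e^{-t/(2\epsilon)}[c_n \cos(\beta_n t) + d_n \sin(\beta_n t)]$; (c) only in case (ii), the borderline $n = k$ has $\Delta_k = 0$, forcing the repeated-root ansatz $T_k(t) = (A_k + B_k t)\, e^{-t/(2\epsilon)}$.

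The constants $c_n, d_n$ (and $A_k, B_k$) are fixed by imposing the initial data termwise: expanding $e^{-\frac{b}{2a^2\epsilon}x} f(x)$ and $-e^{-\frac{b}{2a^2\epsilon}x} g'(x)$ in the orthogonal basis $\{\sin(n\pi x)\}_{n\ge 1}$ (with the standard normalization $\int_0^1 \sin^2(n\pi x)\,dx = 1/2$) yields explicit values for $T_n(0)$ and $T_n'(0)$. Solving the resulting $2\times 2$ linear system in each regime, and then multiplying by $e^{\frac{b}{2a^2\epsilon}x}\sin(n\pi x)$ to reassemble $u$, gives precisely the formulas \eqref{cn1}--\eqref{dn2} and hence \eqref{u}; the borderline case is treated identically with the repeated-root ansatz.

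The main obstacle is the algebraic bookkeeping in those $2\times 2$ systems --- in particular, massaging the output of Cramer's rule into the asymmetric form of \eqref{cn1}--\eqref{dn2} rather than the symmetric sum/difference that arises naturally --- and the separate treatment of the degenerate case $\Delta_k = 0$, which only occurs for the exceptional integer values of $\sqrt{(a^2-b^2)/(4\epsilon^2 a^4\pi^2)}$. Questions of uniform convergence and termwise differentiability of the resulting series are outside the statement of the theorem and are presumed to follow from standard decay estimates on the sine coefficients under the compatibility conditions \eqref{match}.
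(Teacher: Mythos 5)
Your proposal is correct, and it reaches the stated formulas by a mildly but genuinely different route from the paper. The paper separates variables directly in \eqref{eq:1}, so its spatial problem is the non-self-adjoint ODE $a^2X''-\frac{b}{\epsilon}X'-\lambda X=0$ with Dirichlet conditions, and it runs through a three-case discussion of the spatial characteristic equation to conclude that only the complex-root case survives, producing $X_n(x)=e^{\frac{b}{2a^2\epsilon}x}\sin(n\pi x)$ and $\lambda_n=-a^2n^2\pi^2-\frac{b^2}{4a^2\epsilon^2}$; the coefficients are then fixed by dividing the initial data by $e^{\frac{b}{2a^2\epsilon}x}$ and using sine orthogonality, which is exactly what you do. Your gauge transformation $u=e^{\frac{b}{2a^2\epsilon}x}w$ performs that division once and for all at the level of the equation: the spatial problem becomes the textbook Dirichlet sine eigenproblem, the case analysis for $X$ disappears, and the orthogonality used to determine $c_n,d_n$ is immediate; the price is only the preliminary check that the $w_x$ term cancels and the zeroth-order coefficient $\frac{b^2}{4a^2\epsilon^2}$ enters with the correct sign. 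From that point on the temporal ODE, its discriminant trichotomy in $n$, and the $2\times 2$ systems for $(c_n,d_n)$ coincide with the paper's, and solving them does reproduce \eqref{cn1}--\eqref{dn2}. A small dividend of your cleaner bookkeeping: in the degenerate case (ii) the repeated-root ansatz gives the coefficient of $t$ as $2\int_0^1\left(\frac{f(x)}{2\epsilon}-g'(x)\right)e^{-\frac{b}{2a^2\epsilon}x}\sin(k\pi x)\,dx$, which indicates that the factor $\frac{f(x)}{\epsilon}$ and the exponential printed in the theorem's case (ii) are typographical slips. Like the paper, you leave convergence and termwise differentiability of the series aside, which is consistent with the formal scope of the statement.
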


\noindent{\it Proof of Theorem \ref{thm2.1}}\\
 For the initial boundary value problem \eqref{eq:1}, \eqref{bc} and \eqref{initial},  we use the separation of variables and let 
\begin{equation} \label{eq:2}
     u(x,t) = X(x)T(t).
\end{equation}
Substitute this in  \eqref{eq:1} to get
\begin{displaymath} 
     T''(t)X(x)-a^2T(t)X''(x)+\frac{1}{\epsilon}
     \left(b T(t)X'(x)+T'(t)X(x)\right) = 0.
\end{displaymath}
Thus,
\begin{displaymath} 
    \frac{T''(t)}{T(t)}-a^2\frac{X''(x)}{X(x)}+\frac{b}{\epsilon}\frac{X'(x)}{X(x)}+\frac{1}{\epsilon}\frac{T'(t)}{T(t)}=0.
\end{displaymath}
Reorganize this and let
\begin{equation} \label{eq:3}
    \frac{T''(t)}{T(t)}+\frac{1}{\epsilon}\frac{T'(t)}{T(t)}=
    a^2\frac{X''(x)}{X(x)}-\frac{b}{\epsilon}\frac{X'(x)}{X(x)}=
    \lambda
\end{equation}
where \(\lambda\) is a constant.

From (\ref{eq:3}), we get ordinary differential equations for $T(t)$ and $X(x)$:
\begin{align}
    T''(t)+\frac{1}{\epsilon}T'(t)-\lambda T(t)=0\label{eq:5}\\
    a^2 X''(x)-\frac{b}{\epsilon}X'(x)-\lambda X(x)=0\label{eq:4}
\end{align}
Moreover, $X(x)$ satisfies boundary conditions
\begin{equation}\label{BCX}
X(0)=X(1)=0. 
\end{equation}
The characteristic equation for  (\ref{eq:4}) is
\begin{equation} 
    a^2\alpha^2-\frac{b}{\epsilon}\alpha-\lambda=0. 
    \label{charateristic}
\end{equation}
Let \(\Delta_1=\left(-\frac{b}{\epsilon}\right)^2+4a^2\lambda\), we have the following cases: \\
Case 1. \(\Delta_1>0\)
\begin{displaymath}
\lambda > -\frac{b^2}{4a^2\epsilon^2}
\end{displaymath}
Then (\ref{charateristic}) has two roots $\alpha_{\pm}$ given by (\ref{alpha}), and 
\begin{displaymath}
X(x)=C_1 e^{\alpha_{-}x}+C_2 e^{\alpha_{+}x}, 
\end{displaymath}
where $C_1$ and $C_2$ are constants. By \eqref{BCX}, we have
\begin{displaymath}
X(0)=C_1+C_2=0, X(1)= C_1e^{\alpha_{-}}+C_2e^{\alpha_{+}}=0
\end{displaymath}
\(\Rightarrow C_1=C_2=0.\)\\
Case 2.
\(\Delta_1=0\)
\begin{displaymath}
\left(-\frac{b}{\epsilon}\right)^2+4a^2\lambda =0
\end{displaymath}
\begin{displaymath}
\lambda = -\frac{b^2}{4a^2\epsilon^2}
\end{displaymath}
Then (\ref{charateristic}) has only one root
\begin{displaymath}
\alpha=\frac{b}{2a^2\epsilon}
\end{displaymath}
\begin{displaymath}
X(x)=C_1e^{\frac{b}{2a^2\epsilon}x}+C_2x
e^{\frac{b}{2a^2\epsilon}x},
\end{displaymath}
for some constants $C_1$ and $C_2$. 
As in Case 1, we get \(C_1=C_2=0.\)\\ 
Case 3. \(\Delta_1<0\)
\begin{displaymath}
\left(-\frac{b}{\epsilon}\right)^2+4a^2\lambda <0
\end{displaymath}
\begin{displaymath}
\lambda < -\frac{b^2}{4a^2\epsilon^2}
\end{displaymath}
Then (\ref{charateristic}) has two complex roots
\begin{displaymath}
\alpha_=\frac{\frac{b}{\epsilon}\pm i\sqrt{-\left(\frac{b}{\epsilon}\right)^2-4a^2\lambda}}
{2a^2}
\end{displaymath}
\begin{displaymath}
X(x)=e^{\frac{b}{2a^2\epsilon}x}
\left(C_1\cos(\frac{\sqrt{-\left(\frac{b}{\epsilon}\right)^2-4a^2\lambda}}{2a^2}x)+C_2\sin(\frac{\sqrt{-\left(\frac{b}{\epsilon}\right)^2-4a^2\lambda}}{2a^2}x)
\right)
\end{displaymath}
\begin{displaymath}
X(0)=0 \Rightarrow C_1=0
\end{displaymath}
\begin{displaymath}
X(1)=C_2e^{\frac{b}{2a^2\epsilon}}
\left(\sin(\frac{\sqrt{-\left(\frac{b}{\epsilon}\right)^2-4a^2\lambda}}{2a^2})\right)=0 
\end{displaymath}

\begin{math}
\Rightarrow \sin(\frac{\sqrt{-\left(\frac{b}{\epsilon}\right)^2-4a^2\lambda}}{2a^2})=0
\Rightarrow \frac{\sqrt{-\left(\frac{b}{\epsilon}\right)^2-4a^2\lambda}}{2a^2}=n\pi,   n=1,2,3,...
\end{math}

\(\Rightarrow \lambda_{n}=-a^2n^2\pi^2-\frac{b^2}{4a^2\epsilon^2}.\)

Therefore, only Case 3 fits the condition, so
\begin{equation}
    X_{n}=e^{\frac{b}{2a^2\epsilon}x}\sin(n\pi x)    
\end{equation}

Next, solve (\ref{eq:5}) with \(\lambda_{n}=-a^2n^2\pi^2-\frac{b^2}{4a^2\epsilon^2}\) 
\begin{displaymath}
T^{''}(t)+\frac{1}{\epsilon}T'(t)+
\left(a^2n^2\pi^2+\frac{b^2}{4a^2\epsilon^2}\right) T(t)=0
\end{displaymath}
Its characteristic equation is:
\begin{equation} 
    \alpha^2+\frac{1}{\epsilon}\alpha+
    \left(a^2n^2\pi^2+\frac{b^2}{4a^2\epsilon^2}\right)=0
    \label{charateristic:2}
\end{equation}
\begin{displaymath}
\Delta_2=\frac{1}{\epsilon^2}\left(1-\frac{b^2}{a^2}\right)-4a^2n^2\pi^2
\end{displaymath}\\
\noindent If \(\Delta_2>0\), two roots of (\ref{charateristic:2}) are
\begin{displaymath}
\alpha_{n\pm}=\frac{-1\pm\sqrt{\left(1-\frac{b^2}{a^2}\right)-4a^2n^2\epsilon^2 \pi^2}}
{2\epsilon}
\end{displaymath}
\begin{equation} \label{2f}
T_{n}(t)=c_n e^{\alpha_{n-}t} + d_n e^{\alpha_{n+}t}
\end{equation}

\noindent If \(\Delta_2<0\), let $\beta_n=\frac{\sqrt{\left( \frac{b^2}{a^2} -1\right)+4a^2n^2\epsilon^2\pi^2}}{2\epsilon}$.
\begin{equation} \label{1f}
\begin{split}
T_{n}(t)=e^{-\frac{1}{2\epsilon}}[c_n \cos(\beta_n t)
+d_n \sin(\beta_n t)]
\end{split}    
\end{equation}\\
\noindent If \(\Delta_2=0\), 
\begin{equation}
T_{n}(t)=c_n e^{-\frac{1}{2\epsilon}t} + d_n t  e^{-\frac{1}{2\epsilon}t}\label{3f}
\end{equation}
We have the following cases: \\

i). \( \sqrt{\frac{a^2-b^2}{4\epsilon^2a^4\pi^2}}\) is not an integer.\\ 
When \(n\leq\lfloor\sqrt{\frac{a^2-b^2}{4\epsilon^2a^4\pi^2}}\rfloor \), \(\Delta_2>0\).
\begin{displaymath}
T_{n}(t)=(\ref{2f})
\end{displaymath}
When \(n>\lfloor\sqrt{\frac{a^2-b^2}{4\epsilon^2a^4\pi^2}}\rfloor \), \(\Delta_2<0\).
\begin{displaymath}
T_{n}(t)=(\ref{1f})
\end{displaymath}

ii). \( \sqrt{\frac{a^2-b^2}{4\epsilon^2a^4\pi^2}}\) is an integer.\\
When \(n<\sqrt{\frac{a^2-b^2}{4\epsilon^2a^4\pi^2}}\), \(\Delta_2>0\).
\begin{displaymath}
T_{n}(t)=(\ref{2f})
\end{displaymath}
When \(n=\sqrt{\frac{a^2-b^2}{4\epsilon^2a^4\pi^2}}\), \(\Delta_2=0\).
\begin{displaymath}
T_{n}(t)=(\ref{3f})
\end{displaymath}
When \(n>\sqrt{\frac{a^2-b^2}{4\epsilon^2a^4\pi^2}}\), \(\Delta_2<0\).
\begin{displaymath}
T_{n}(t)=(\ref{1f})
\end{displaymath}

For case i), let \(k=\lfloor\sqrt{\frac{a^2-b^2}{4\epsilon^2a^4\pi^2}}\rfloor \)
\begin{displaymath}
u(x,t)=\sum_{n=1}^{\infty}X_{n}(x)T_{n}(t)
\end{displaymath}\\
With initial condition (\ref{initial:1}),
\begin{displaymath}
\begin{split}
u(x,0) 
&= e^{\frac{b}{2a^2\epsilon}x}\sum_{n=1}^{k}(c_n + d_n) \sin(n\pi x) +  e^{\frac{b}{2a^2\epsilon}x}\sum_{n=k+1}^{\infty}c_n \sin(n\pi x)= f(x),    
\end{split}
\end{displaymath}
we have\\
\begin{equation}\label{coef1}
c_n + d_n =2\int_{0}^{1}\frac{f(x)}{e^{\frac{b}{2a^2\epsilon}x}}\sin(n\pi x)dx,\quad n=1,2,3,...,k
\end{equation}

\begin{equation}\label{coef2}
    c_n =2\int_{0}^{1}\frac{f(x)}{e^{\frac{b}{2a^2\epsilon}x}}\sin(n\pi x)dx,\quad n=k+1,k+2,k+3,...
\end{equation}\\
Replace $\cos(\beta_n t)$ and $\sin(\beta_n t)$ in (\ref{1f}) according to the following identities
\begin{align} 
\cos(\beta_n t) = \frac{e^{i\beta_n t} + e^{-i\beta_n t}}{2}\\
\sin(\beta_n t) = \frac{e^{i\beta_n t} - e^{-i\beta_n t}}{2i}
\end{align}

$u(x,t)$ can be written as
\begin{equation}
\begin{split}
u(x,t) &= e^{\frac{b}{2a^2\epsilon}x}\sum_{n=1}^k (c_n e^{\alpha_{n-} t} + d_n e^{\alpha_{n+}t}) \sin(n\pi x) \\
&+  e^{\frac{b}{2a^2\epsilon}x}\sum_{n=k+1}^{\infty}\left[\frac{c_n - id_n}{2}e^{i\beta_n t} + \frac{c_n + id_n}{2}e^{-i\beta_n t}\right]e^{-\frac{1}{2\epsilon}t}
\end{split}
\end{equation}
With initial condition (\ref{initial:2}),
\begin{equation}
\begin{split}
u_{t}(x,0) &= e^{\frac{b}{2a^2\epsilon}x}\sum_{n=1}^k (c_n \alpha_{n-}+ d_n \alpha_{n+}) \sin(n\pi x) \\
&+  e^{\frac{b}{2a^2\epsilon}x}\sum_{n=k+1}^{\infty}\left[\frac{c_n - id_n}{2}(i\beta_n - \frac{1}{2\epsilon}) + \frac{c_n + id_n}{2}(-i\beta_n - \frac{1}{2\epsilon})\right]\\
                 & = -g'(x),
\end{split}
\end{equation}
 
we have
\begin{equation}\label{coef3}
 c_n \alpha_{n-} + d_n\alpha_{n+} = 2\int_{0}^{1}-g'(x){e^{-\frac{b}{2a^2\epsilon}x}}\sin(n\pi x)dx, \quad n\leq k\\
\end{equation}
and
\begin{align}
  \frac{c_n - id_n}{2}(i\beta_n - \frac{1}{2\epsilon}) &+ \frac{c_n + id_n}{2}(-i\beta_n - \frac{1}{2\epsilon})\notag\\
 &=2\int_{0}^{1}-g'(x){e^{-\frac{b}{2a^2\epsilon}x}}\sin(n\pi x)dx n\ge k+1,\end{align}
 which implies 
\begin{equation}\label{coef4}
-\frac{1}{2\epsilon}c_n + d_n \beta_n 
=2\int_{0}^{1}-g'(x){e^{-\frac{b}{2a^2\epsilon}x}}\sin(n\pi x)dx, n\ge k+1.
\end{equation}

When $n \leq  k$, we obtain $c_n$ and $d_n$ given by \eqref{cn1} and \eqref{dn1}  from (\ref{coef1}) and (\ref{coef3}).  
When $n\geq k+1$,  the formula for $c_n$ and $d_n$ follow from (\ref{coef2}) and (\ref{coef4}), and \eqref{u} is proved.

Case ii) in Theorem \ref{thm2.1} can be shown similarly.
    
\subsection{Analysis of the Solutions of Fourier Series }
We prove the following Theorem
\begin{theorem}\label{thm2.2} For the solution given in Theorem \ref{thm2.1}, let $u_n(x, t)=T_n(t)X_n(x)=:A_n(x, t)sin(n\pi x)$. Then when $\epsilon$ is sufficiently small, \\i) for $b>0$,
\begin{equation}\label{eu1}
|A_1(x, t)|\le A\epsilon \exp\left(\frac{b}{2a^2\epsilon}\left(x-\frac{a^2}{b}\left(1-\sqrt{1-\frac{b^2}{a^2}-4a^2\epsilon^2 \pi^2}\right)t\right)\right),
\end{equation} 
for small $\epsilon$. 
\begin{equation}\label{eu1a}
|A_{k-m}(x, t)|\le\frac{ B\sqrt{\epsilon}}{\sqrt m} \exp\left(\frac{b}{2a^2\epsilon}\left(x-\frac{a^2}{b}\left(1-\sqrt{1-\frac{b^2}{a^2}-4a^2\epsilon^2 (k-m)^2\pi^2}\right)t\right)\right),
\end{equation}
\begin{equation}\label{eu1b}
|A_{k+m}(x, t)|\le\frac{ C\sqrt{\epsilon}}{\sqrt m} \exp\left(\frac{b}{2a^2\epsilon}\left(x-\frac{a^2}{b}t\right)\right),
\end{equation}
for \(k=\lfloor\sqrt{\frac{a^2-b^2}{4\epsilon^2a^4\pi^2}}\rfloor \) , $m\ge 1$, and $\epsilon\le \frac{\delta}{m}$ for some small $\delta>0$, where $A$,  $B$ and $C$  are constants independent of $\epsilon$, defined in $0\le x\le 1$ and $t>0$.
\\ii) for $b=0$, $t>0$,\\
 for any fixed $n<k$, $k=\lfloor\frac{1}{2\epsilon a^2 \pi^2}\rfloor$, $ A_n(x, t)-2\int_{0}^1 f(x)\sin(nx) dx\to 0$ as $\epsilon \to 0$; \\
 for $n=k+m$, $m\geq1$, $A_n(x,t)\to 0$ as $\epsilon \to 0$.
\end{theorem}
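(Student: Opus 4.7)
The plan is to reduce the bounds on $A_n(x,t)$ to bounds on the coefficients $c_n,d_n$ by exploiting the natural exponential decay built into $T_n(t)$. Since $X_n(x)=e^{\frac{b}{2a^2\epsilon}x}\sin(n\pi x)$, we have $A_n(x,t)=T_n(t)e^{\frac{b}{2a^2\epsilon}x}$. For $n\le k$, since $\alpha_{n-}<\alpha_{n+}<0$, I would use the triangle inequality to get $|T_n(t)|\le(|c_n|+|d_n|)e^{\alpha_{n+}t}$; a routine rearrangement $\alpha_{n+}t=-\tfrac{b}{2a^2\epsilon}\cdot\tfrac{a^2}{b}\bigl(1-\sqrt{1-b^2/a^2-4a^2n^2\epsilon^2\pi^2}\bigr)t$ matches the exponents in \eqref{eu1} and \eqref{eu1a}. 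For $n\ge k+1$, $|T_n(t)|\le(|c_n|+|d_n|)e^{-t/(2\epsilon)}$, and $-t/(2\epsilon)=\tfrac{b}{2a^2\epsilon}\cdot(-\tfrac{a^2}{b}t)$ matches \eqref{eu1b}. The problem therefore reduces to showing $|c_n|+|d_n|=O(\epsilon)$ when $n=1$ and $|c_n|+|d_n|=O(\sqrt{\epsilon/m})$ when $n=k\pm m$.

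Next I would establish uniform estimates on the weighted integrals
\[
I_1^{(n)}:=\int_0^1 f(x)e^{-\lambda x}\sin(n\pi x)\,dx,\qquad I_2^{(n)}:=\int_0^1 g'(x)e^{-\lambda x}\sin(n\pi x)\,dx,
\]
where $\lambda:=\tfrac{b}{2a^2\epsilon}\to\infty$. Because the weight concentrates at $x=0$ and the compatibility conditions force $f(0)=0$ and $g'(0)=0$, a single integration by parts (boundary terms vanish at $x=0$ and are super-exponentially small at $x=1$) yields $|I_j^{(n)}|\le C\epsilon$ uniformly in $n\ge 1$, with constants depending only on $\|f\|_{C^1}$, $\|g'\|_{C^1}$, $a$ and $b$. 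A second integration by parts sharpens this to $|I_j^{(n)}|\le C\epsilon/n$, which will be needed to absorb sharp near-resonance cases.

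The crux is the lower bound on the resonance factor $\sqrt{(1-b^2/a^2)-4a^2n^2\epsilon^2\pi^2}$ near the transition $n=k$. Setting $k_0:=\tfrac{\sqrt{a^2-b^2}}{2a^2\pi\epsilon}$ so that $4a^2k_0^2\epsilon^2\pi^2=1-b^2/a^2$ and $k=\lfloor k_0\rfloor$, I would factor
\[
\bigl|(1-b^2/a^2)-4a^2(k\mp m)^2\epsilon^2\pi^2\bigr|=4a^2\epsilon^2\pi^2\bigl(k_0+(k\mp m)\bigr)\bigl|k_0-(k\mp m)\bigr|.
\]
Since $k_0$ is of order $1/\epsilon$ and the hypothesis $\epsilon\le\delta/m$ keeps $m\epsilon$ small, the first factor contributes order $1/\epsilon$ and the second contributes order $m$ away from the exceptional regime where $k_0$ is very close to an integer (where the refined bound $|I_j^{(n)}|\le C\epsilon/n$ is invoked). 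Hence $\sqrt{\,\cdot\,}$ is at least of order $\sqrt{m\epsilon}$, so the prefactors $\tfrac{2\epsilon}{\sqrt{\,\cdot\,}}$ and $\tfrac{4\epsilon}{\sqrt{\,\cdot\,}}$ appearing in \eqref{cn1}--\eqref{dn2} are $O(\sqrt{\epsilon/m})$; combined with the integral estimate this produces $|c_{k\pm m}|+|d_{k\pm m}|=O(\sqrt{\epsilon/m})$. For $n=1$ the denominator stays bounded below by a positive constant close to $\sqrt{1-b^2/a^2}$, giving the cleaner $O(\epsilon)$ bound.

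For part ii) with $b=0$, the boundary-layer factor trivializes so $A_n=T_n$, and \eqref{cn1}--\eqref{dn1} become explicit algebraic identities in $\int_0^1 f\sin(n\pi x)\,dx$ and $\int_0^1 g'\sin(n\pi x)\,dx$. For fixed $n$ I would expand $\sqrt{1-4a^2n^2\epsilon^2\pi^2}=1-2a^2n^2\pi^2\epsilon^2+O(\epsilon^4)$ to find $c_n\to 0$ and $d_n\to 2\int_0^1 f(x)\sin(n\pi x)\,dx$, while $\alpha_{n+}\to 0$ and $\alpha_{n-}\to-\infty$; combining gives the first claim. For $n=k+m$ the prefactor $e^{-t/(2\epsilon)}$ vanishes as $\epsilon\to 0$ while the oscillatory bracket stays bounded, forcing $A_n\to 0$. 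The main obstacle throughout is the lower bound on the resonance factor near $n=k$: the floor operation introduces an $O(1)$ gap between $k$ and $k_0$, yet because $k_0$ is of order $1/\epsilon$ the product $k_0\cdot m$ dominates the difference of squares and delivers the required $\sqrt{m\epsilon}$ scaling, precisely in the asymptotic regime $m\epsilon\ll 1$ enforced by the hypothesis.
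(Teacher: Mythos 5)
Your proposal is correct in substance and follows the same overall strategy as the paper: bound $|A_n|\le(|c_n|+|d_n|)$ times the temporal decay factor ($e^{\alpha_{n+}t}$ for $n\le k$, $e^{-t/2\epsilon}$ for $n\ge k+1$), rewrite the exponents exactly as you do, and then estimate the coefficients. The differences are in the two supporting estimates. For the weighted integrals the paper does not integrate by parts at all: it simply bounds $f,g'$ in $L^\infty$ and uses $\int_0^1 e^{-\frac{b}{2a^2\epsilon}x}dx\le\frac{2a^2\epsilon}{b}$, which already gives the $O(\epsilon)$ bound uniformly in $n$ with no compatibility conditions; note also that the boundary terms you invoke vanish automatically because $\sin(n\pi x)=0$ at $x=0,1$, so the hypotheses $f(0)=0$, $g'(0)=0$ are not actually what saves you, and one integration by parts in the naive way gives $O((1+n)\epsilon^2)$ rather than $O(\epsilon)$ uniformly in all $n$. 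For the resonance factor the paper expands $\lfloor k_0\rfloor^2-2\lfloor k_0\rfloor m+m^2$ and uses $\epsilon\le\frac{\sqrt{a^2-b^2}}{4\pi a^2 m}$, while your difference-of-squares factorization $4a^2\epsilon^2\pi^2(k_0+n)|k_0-n|$ is an equivalent (arguably cleaner) route for $n=k-m$; for $n=k+m$ you rightly flag the near-integer degeneracy at $m=1$, which the paper silently skips with ``similarly,'' though your proposed fix via the refined $\epsilon/n$ bound does not fully control the case $k+1-k_0\ll\epsilon^2$ either, so on this point you match rather than exceed the paper's rigor. In part ii) your expansion argument is the paper's, with one small slip: for $n=k+m$ the bracket need not stay bounded (the coefficient $d_n$ can grow like $(m\epsilon)^{-1/2}$ near the transition), but the factor $e^{-t/2\epsilon}$ still forces $A_n\to0$ for fixed $t>0$, which is what the paper relies on.
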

\begin{remark} The case for $b<0$ can be discussed as is the case for $b>0$, by replacing $x$ by $1-x$.\end{remark}
\begin{remark} $A_n$ ($n\ge 1$) are the amplitutes of Fourier modes. For $b>0$, by \eqref{eu1}, and  \eqref{eu1a}, we have, for $n<k$, that $A_n(x, t)\to 0$ as $\epsilon\to 0$ for  $\left(x-\frac{a^2}{b}\left(1-\sqrt{1-\frac{b^2}{a^2}}\right)t\right)<0$. For $n>k$, we have that $A_n(x, t)\to 0$ as $\epsilon\to 0$ for  $\left(x-\frac{a^2}{b}t\right)<0$. \end{remark}
\noindent{\it Proof of Theorem \ref{thm2.2}}. 
\\i)\\
For $n=1$, we note that
\begin{equation}\label{alpha1}
\alpha_{1\pm}=\frac{-1\pm\sqrt{\left(1-\frac{b^2}{a^2}\right)-4a^2\epsilon^2 \pi^2}}
{2\epsilon},
\end{equation}
\begin{align}
c_1&= \int_{0}^{1} f(x)e^{ - \frac{b}{2a^2\epsilon}x}\sin(\pi x)dx -
          \frac{2\epsilon}{\sqrt{\left(1 - \frac{b^2}{a^2} \right)-4a^2\epsilon^2\pi^2}}
          \int_{0}^{1}\left(\frac{f(x)}{2\epsilon} - g'(x)\right) e^{- \frac{b}{2a^2\epsilon}x} \sin(\pi x)dx\label{c1},\\
d_1&= \int_{0}^{1} f(x)e^{ - \frac{b}{2a^2\epsilon}x}\sin(\pi x)dx +
          \frac{2\epsilon}{\sqrt{\left(1 - \frac{b^2}{a^2} \right)-4a^2\epsilon^2\pi^2}}
          \int_{0}^{1}\left(\frac{f(x)}{2\epsilon} - g'(x)\right) e^{- \frac{b}{2a^2\epsilon}x} \sin(\pi x)dx\label{d11},
\end{align}
\begin{equation}\label{u1}
u_1(x, t)=e^{\frac{b}{2a^2\epsilon}x}\sin(\pi x)\left(c_1 e^{\alpha_{1-}t} + d_1 e^{\alpha_{1+}t}\right)=A_1(x, t)\sin(\pi x).
\end{equation}
Apparently, \begin{equation}\label{u1a}
|A_1|(x, t)\le (|c_1|+|d_1|)e^{\frac{b}{2a^2\epsilon}x+\alpha_{1+}t}, 
\end{equation}
for $0\le x\le 1$ and $t>0$. We estimate $c_1$ and $d_1$ as follows, 
\begin{align}\label{ab1}
&|\int_{0}^{1} f(x)e^{ - \frac{b}{2a^2\epsilon}x}\sin(\pi x)dx|\notag\\
\le &\max_{x\in [0, 1]}|f(x)|\int_{0}^{1} e^{ - \frac{b}{2a^2\epsilon}x}dx\le \frac{2a^2\epsilon}{b} \max_{x\in [0, 1]}|f(x)|, \end{align}
\begin{align}\label{ab2}
&\left|\frac{2\epsilon}{\sqrt{(1 - \frac{b^2}{a^2} )-4a^2\epsilon^2\pi^2}}
          \int_{0}^{1}\frac{f(x)}{2\epsilon} e^{- \frac{b}{2a^2\epsilon}x} \sin(\pi x)dx\right|\notag\\
 &\le \frac{\sqrt 2}{\sqrt{1 - \frac{b^2}{a^2}}} \int_{0}^{1}|f(x)| e^{- \frac{b}{2a^2\epsilon}x}dx\notag\\
 &\le  \frac{\sqrt 2}{\sqrt{1 - \frac{b^2}{a^2} }}\frac{2a^2\epsilon}{b} \max_{x\in [0, 1]}|f(x)|, 
 \end{align}
 \begin{align}\label{ab2}
&\left|\frac{2\epsilon}{\sqrt{(1 - \frac{b^2}{a^2} )-4a^2\epsilon^2\pi^2}}
          \int_{0}^{1}g'(x)e^{- \frac{b}{2a^2\epsilon}x} \sin(\pi x)dx\right|\notag\\
&\le  \frac{\sqrt 2\epsilon}{\sqrt{1 - \frac{b^2}{a^2} }}\frac{2a^2\epsilon}{b} \max_{x\in [0, 1]}|g'(x)|, 
 \end{align}
 for small $\epsilon$. Hence\\
$$|c_{1}|+|d_{1}|\le  \frac{4a^2\epsilon}{b} \max_{x\in [0, 1]}|f(x)|+\frac{4\sqrt {2}}{\sqrt{1-\frac{b^2}{a^2}}}\frac{a^2\epsilon}{b} \max_{x\in [0, 1]}(|f(x)|+\epsilon|g'(x)|).$$

(\ref{eu1}) follows from the above estimates then.\\

For $n=k-m$, $m\ge 1$, we estimate $c_{k-m}$ and $d_{k-m}$ as follows. 
\begin{equation*}
\begin{split}
& 1 - \frac{b^2}{a^2} -4a^2(k-m)^2\epsilon^2\pi^2\\
&=1-\frac{b^2}{a^2}-4a^2\epsilon^2\pi^2(\lfloor\frac{\sqrt{a^2-b^2}}{2a^2\pi\epsilon}\rfloor^2-2\lfloor\frac{\sqrt{a^2-b^2}}{2a^2\pi\epsilon}\rfloor m+m^2)\\
&\geq 8a^2\epsilon^2\pi^2m\lfloor\frac{\sqrt{a^2-b^2}}{2a^2\pi\epsilon}\rfloor-4a^2\epsilon^2\pi^2m^2\\
&\geq 2\epsilon\pi m \sqrt{a^2-b^2}-4a^2\epsilon^2\pi^2m^2\\
&\geq \pi m\epsilon \sqrt {a^2-b^2},
\end{split}
\end{equation*}
if $\epsilon\le \frac{\sqrt {a^2-b^2}}{4\pi a^2 m}$. \\
Hence
$$|c_{k-m}|+|d_{k-m}|\le  \frac{4a^2\epsilon}{b} \max_{x\in [0, 1]}|f(x)|+\frac{4\sqrt \epsilon}{\sqrt{\pi m \sqrt {a^2-b^2}}}\frac{2a^2}{b} \max_{x\in [0, 1]}(|f(x)|+\epsilon|g'(x)|),$$
if $\epsilon\le \frac{\delta}{m}$ for some small $\delta$.  This proves \eqref{eu1a}. \eqref{eu1b} can be proved similarly. 
\\
ii)\\
When $b = 0$,  the equilibrium equation \eqref{equi} becomes $u_t=0$ to which we denote the solution by  $\bar{u}(x, t)$ which is independent of $t$. Then we have 
\begin{equation*}
    \bar{u}(x, t) = f(x) = \sum_{n=0}^{\infty}a_n \sin(n\pi x), \quad \text{where } a_n = 2\int_{0}^1 f(x)\sin(n\pi x) dx.
\end{equation*}
Denote the solution for the problem (\ref{eq:1}), \eqref{bc} and \eqref{initial} by $u(x, t)=\sum_{n=1}^{\infty} A_n(x, t) \sin(n\pi x)$. 
We will show that for any fixed $n<k$, $ A_n(x, t)-a_n\to 0$ as $\epsilon \to 0$ for $t>0$. \\
From (\ref{u}), when $b = 0$, we have $k=\lfloor\frac{1}{2\epsilon a\pi}\rfloor$ ,
\begin{align} 
u(x,t) =&\sum_{n=1}^{k}\sin(n\pi x)\left(c_n e^{\alpha_{n-}t} + d_n e^{\alpha_{n+}t}\right)\notag \\ 
&+ \sum_{n=k+1}^{\infty}\sin(n\pi x)e^{-\frac{1}{2\epsilon}t}
\left[c_n \cos(\beta_n t)+ d_n \sin(\beta_n t)\right]
\end{align}
where, for $n\le k$, 
\begin{displaymath}
\alpha_{n\pm}=\frac{-1\pm\sqrt{1-4a^2n^2\epsilon^2 \pi^2}}
{2\epsilon},
\end{displaymath}
\begin{align}
c_n&= \int_{0}^{1} f(x)\sin(n\pi x)dx -
          \frac{2\epsilon}{\sqrt{1-4a^2n^2\epsilon^2\pi^2}}
          \int_{0}^{1}\left(\frac{f(x)}{2\epsilon} - g'(x)\right) \sin(n\pi x)dx,\\
d_n&= \int_{0}^{1} f(x)\sin(n\pi x)dx +
          \frac{2\epsilon}{\sqrt{1 - 4a^2n^2\epsilon^2\pi^2}}
          \int_{0}^{1}\left(\frac{f(x)}{2\epsilon} - g'(x)\right) \sin(n\pi x)dx
\end{align}
and  for $n> k$,
\begin{align}
c_n&= 2\int_{0}^{1} f(x)\sin(n\pi x)dx, \\
d_n&= \frac{4\epsilon}{\sqrt{4a^2n^2\epsilon^2\pi^2-1}}
          \int_{0}^{1}\left(\frac{f(x)}{2\epsilon} - g'(x)\right)  \sin(n\pi x)dx.
\end{align}

Obviously, for any fixed $n<k$ and $t>0$, $e^{\alpha_n- t}\to 0$ as $\epsilon\to 0$.

We denote $$u(x, t)=\sum_{n=1}^{\infty} A_n(x, t) \sin(n\pi x). $$

When $n \leq k$,
let $w_n = \frac{4\epsilon}{\sqrt{1-4a^2n^2\epsilon^2\pi^2}}
          \int_{0}^{1}\left(\frac{f(x)}{2\epsilon} - g'(x)\right)  \sin(n\pi x)dx $. Then 
\begin{align}
c_n&= \frac{a_n - w_n}{2}\\
d_n&= \frac{a_n + w_n}{2}
\end{align}
We employ Taylor expansion to $\alpha_{n+}$ to get
\begin{equation*}
    \alpha_{n+} = \frac{-1 + \sqrt{1-4a^2 n^2 \epsilon^2 \pi^2}}{2\epsilon} = \frac{-1 + 1 - \frac{4a^2 n^2 \epsilon^2 \pi^2}{2} +n^4 O(\epsilon ^ 4)}{2\epsilon} 
    = -a^2 n^2 \pi^2\epsilon +n^4 O(\epsilon^3)
\end{equation*}

Furthermore we have 
\begin{equation*}
e^{\alpha_{n+}t} = 1 - a^2 n^2 \pi^2\epsilon t +n^4t O(\epsilon^3)
\end{equation*}
and
\begin{align*}
    w_n &= \frac{4\epsilon}{\sqrt{1-4a^2n^2\epsilon^2\pi^2}}
          \int_{0}^{1}\left(\frac{f(x)}{2\epsilon} - g'(x)\right)  \sin(n\pi x)dx \\
        &= \frac{2}{\sqrt{1-4a^2n^2\epsilon^2\pi^2}} \int_{0}^{1}\left(f(x) - 2\epsilon g'(x)\right)\sin(n\pi x)dx
       \end{align*}
       By Taylor expansion, it is easy to show that
       $$w_n=a_n+O(\epsilon)+n^2O(\epsilon^2),$$
       for any fixed $n<k$, as $\epsilon \to 0$.

Therefore, when $\epsilon \rightarrow 0$,
\begin{equation*}\begin{split}
&d_n e^{\alpha_{n+}t} - a_n\\
& =\frac{a_n + w_n}{2}e^{\alpha_{n+}t} - a_n\\
& =
\frac{a_n + a_n + O(\epsilon) + n^2O(\epsilon^2)}{2}(1 - a^2 n^2 \pi^2\epsilon t +n^4 t O(\epsilon^3))-a_n \\
&= O(\epsilon) + n^2(1+t) O(\epsilon^2). 
\end{split}
\end{equation*}
Since $e^{\alpha_{n-}t} \rightarrow 0$ as $\epsilon \rightarrow 0$ for $t>0$ and $n<k$, we have $A_n(x, t)-a_n\to 0$ as $\epsilon \to 0$ for any fixed $n<k$ and $t>0$. The case $n=k+m$ for $m\ge 1$ can be analysed as in the case when $b>0$.

\section{Analysis by the energy method}

\subsection{The case when $b=0$}
In the case of $b=0$, the equilibrium equation \eqref{equi} becomes

\begin{equation*}
\bar{u}_{t}=0.
\end{equation*}
With the initial value $\bar{u}(x, 0)=f(x)$, then we have
$\bar u(x, t)=f(x), \ x\in[0,1], t\ge 0$. 
Let $u$ be the smooth solution of \eqref{eq:1}, \eqref{bc} and \eqref{initial}. Set
\begin{equation}
w=u-\bar{u}
\end{equation}
Then $w$ is a solution to the following initial boundary value problem: 
\begin{equation}\label{w}\begin{cases}
& w_{tt}-a^2w_{xx}-a^2u_{xx}+\frac{1}{\epsilon}w_t=0, 0\le x\le 1, t>0, \\
&w(0, t)=w(1, t)=0, \\
&w(x, 0)=0, w_t(x, 0)=-g'(x).\end{cases}
\end{equation}
\begin{theorem}\label{thm3.1}  Let $w$ be the solution to problem \eqref{w}. It then holds that 
\begin{equation}\label{10171} 
\int_0^1w^2(x,t)dx+\int_0^t \int_0^1 w_t^2(x, s)dxds \leq C\epsilon \left(\int_0^1(g'(x))^2dx+t\int_0^1(f'(x))^2+(f''(x))^2)dx\right), 
\end{equation}
for $0<\epsilon<1/4$ and $t>0$, where $C$ is a constant only depending on $a$. 
\end{theorem}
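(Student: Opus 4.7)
The plan is to recognize that once $a^2\bar u_{xx}=a^2 f''(x)$ is moved to the right-hand side, the equation for $w$ becomes a damped linear wave equation with a time-independent source,
$$w_{tt} - a^2 w_{xx} + \frac{1}{\epsilon}\,w_t \;=\; a^2 f''(x),$$
together with homogeneous Dirichlet data and initial data $w(x,0)=0$, $w_t(x,0)=-g'(x)$. The strategy is a standard $w_t$-multiplier energy estimate, in which the strong dissipation $\tfrac{1}{\epsilon}\int w_t^2\,dx$ is used to absorb the source, and then $\int w^2\,dx$ is recovered from $\int w_t^2$ via the representation $w(x,t)=\int_0^t w_t(x,s)\,ds$ afforded by the zero initial datum.

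Multiplying the equation by $w_t$ and integrating over $[0,1]$, the boundary contribution from integrating $-a^2 w_t w_{xx}$ by parts vanishes, since $w(0,t)=w(1,t)=0$ forces $w_t$ to vanish at the endpoints. This produces the identity
$$\frac{d}{dt}\int_0^1 \tfrac12\bigl(w_t^2+a^2 w_x^2\bigr)\,dx + \frac{1}{\epsilon}\int_0^1 w_t^2\,dx \;=\; a^2\int_0^1 f''(x)\,w_t\,dx.$$
I would control the source by Young's inequality with a weight tuned to absorb half the dissipation,
$$a^2\int_0^1 f''\,w_t\,dx \;\le\; \frac{1}{2\epsilon}\int_0^1 w_t^2\,dx + \frac{\epsilon a^4}{2}\int_0^1 (f''(x))^2\,dx,$$
and integrate in time, using $w_x(x,0)=0$ and $w_t(x,0)=-g'(x)$, to arrive at
$$\int_0^1\bigl(w_t^2+a^2 w_x^2\bigr)(x,t)\,dx + \frac{1}{\epsilon}\int_0^t\!\int_0^1 w_t^2\,dx\,ds \;\le\; \int_0^1(g'(x))^2\,dx + a^4\epsilon t\int_0^1(f''(x))^2\,dx.$$
Multiplying by $\epsilon$ gives $\int_0^t\!\int_0^1 w_t^2\,dxds \le \epsilon\int(g')^2+a^4\epsilon^2 t\int(f'')^2$, which for $\epsilon<1/4$ is dominated by $C\epsilon\bigl(\int(g')^2+t\int(f'')^2\bigr)$.

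For the remaining piece $\int_0^1 w^2(x,t)\,dx$, the zero initial datum lets me write $w(x,t)=\int_0^t w_t(x,s)\,ds$; one Cauchy--Schwarz in $s$ then gives $w^2(x,t)\le t\int_0^t w_t^2(x,s)\,ds$, and integrating in $x$ recycles the previous bound to yield $\int_0^1 w^2(x,t)\,dx\le t\int_0^t\!\int_0^1 w_t^2\,dxds$. Adding this to the dissipation estimate above and again using $\epsilon<1/4$ to dominate $\epsilon^2$ by $\epsilon$ produces an inequality of the form asserted in \eqref{10171}.

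The main subtlety I foresee is the appearance of the $\int(f'(x))^2\,dx$ contribution on the right-hand side, which the Young step above does not produce directly. A natural remedy is to integrate by parts in $x$ before applying Young: because $w_t$ vanishes at the endpoints,
$$a^2\int_0^1 f''(x)\,w_t\,dx \;=\; -a^2\int_0^1 f'(x)\,w_{xt}\,dx \;=\; -a^2\frac{d}{dt}\int_0^1 f'(x)\,w_x(x,t)\,dx,$$
a perfect time derivative that can be absorbed into a modified energy $\int\bigl[\tfrac12(w_t^2+a^2 w_x^2)+a^2 f' w_x\bigr]\,dx$. Estimating the cross term by $|a^2 f' w_x|\le \tfrac{a^2}{2}(f')^2+\tfrac{a^2}{2}w_x^2$ then yields the complementary bound $\int_0^t\!\int_0^1 w_t^2\,dxds\le C\epsilon\bigl(\int(g')^2+\int(f')^2\bigr)$. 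Combining the two energy estimates and absorbing constants into $C$ delivers the full bound \eqref{10171}.
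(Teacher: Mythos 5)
Your $w_t$-multiplier estimate is correct as far as it goes: it gives $\int_0^t\int_0^1 w_t^2\,dx\,ds \le \epsilon\int_0^1(g'(x))^2dx + a^4\epsilon^2 t\int_0^1(f''(x))^2dx$, which indeed controls the dissipation part of \eqref{10171} (and does not even need the $(f')^2$ term). The gap is in how you recover $\int_0^1 w^2(x,t)\,dx$. Writing $w(x,t)=\int_0^t w_t(x,s)\,ds$ and using Cauchy--Schwarz yields $\int_0^1 w^2(x,t)\,dx \le t\int_0^t\int_0^1 w_t^2\,dx\,ds \le \epsilon t\int_0^1(g')^2dx + a^4\epsilon^2 t^2\int_0^1(f'')^2dx$. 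This is \emph{not} of the form asserted in \eqref{10171}: there the $g'$-contribution is $C\epsilon\int_0^1(g')^2dx$ uniformly in $t$, and the $f$-contribution grows only linearly in $t$, with $C$ depending only on $a$. Take $f\equiv 0$ and $g'\not\equiv 0$: the theorem claims $\int_0^1 w^2(x,t)dx\le C\epsilon\int_0^1(g')^2dx$ for all $t>0$, whereas your argument only gives a bound growing linearly in $t$; similarly $\epsilon^2t^2$ cannot be dominated by $C\epsilon t$ for large $t$. So your final sentence (``produces an inequality of the form asserted'') does not go through, and your proposed modified-energy remedy for the $(f')^2$ term does not touch this issue --- it only reshuffles the source estimate in the $w_t$-energy and leaves the control of $\int_0^1 w^2(x,t)dx$ unrepaired.

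The missing idea is the second multiplier, which is exactly what the paper uses: multiply the equation in \eqref{w} by $w$ as well and add the two identities. The term $\frac{1}{\epsilon}w_t\cdot w$ produces $\frac{d}{dt}\int_0^1\frac{w^2}{2\epsilon}dx$, so the combined energy $\int_0^1\bigl(\frac{w^2}{2\epsilon}+ww_t+\frac12 w_t^2+\frac{a^2}{2}w_x^2\bigr)dx$ controls $\frac{1}{\epsilon}\int_0^1 w^2(x,t)dx$ pointwise in time (the cross term is handled by $|ww_t|\le w^2+\frac14 w_t^2$ together with $\epsilon<1/4$), while the $-\int_0^1 w_t^2dx$ coming from the $w$-multiplier is absorbed by the damping $\frac{1}{\epsilon}\int_0^1 w_t^2dx$, and the new source $-a^2\int_0^1 \bar u_x w_x\,dx$ is absorbed into the $a^2\int_0^1 w_x^2dx$ term --- this is precisely where the $t\int_0^1(f')^2dx$ contribution on the right-hand side of \eqref{10171} comes from. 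Dividing by the coefficient of order $\frac{1}{\epsilon}$ then gives the stated bound with no loss of factors of $t$; without this $\frac{w^2}{2\epsilon}$ term in the energy, the $t$-uniform control of $\int_0^1 w^2(x,t)dx$ claimed in the theorem is out of reach of your argument.
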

\begin{proof} Multiply the first equation in \eqref{w} by $w_t$ and $w$ respectively, and integrate the resulting equations by parts over $[0, 1]$, and use the boundary conditions to get
\begin{equation}\label{energy1}
\frac{d}{dt}\int_{0}^{1}(\frac{w^2}{2\epsilon}+w w_{t})(x, t){dx}-\int_{0}^{1}w_t^2(x, t){dx}+a^2 \int_{0}^{1}w_x^2(x, t){dx}=-\int_{0}^{1}a^2\bar{u}_x w_x(x, t) dx\end{equation}
\begin{equation}\label{energy2}
\frac{d}{dt}\int_{0}^{1}(\frac{1}{2}w_t^2+\frac{a^2}{2}w_x^2)(x, t) dx+\frac{1}{\epsilon}\int_{0}^{1}w_t^2(x, t)dx=a^2\int_{0}^{1}\bar{u}_{xx}w_t(x, t)dx. \end{equation}
By Cauchy-Schwarz inequality, we have
\begin{equation*}
\int_0^1|a^2\bar{u}_{xx}w_t|dx \leq \int_0^1(\frac{1}{2}w_t^2+\frac{a^4}{2}\bar u_{xx}^2)dx
\end{equation*}
\begin{equation*}
\int_0^1a^2|\bar{u}_x w_x |dx\leq \int_0^1 (\frac{a^2}{2}w_x^2+\frac{a^2}{2}\bar{u}_x^2)dx.
\end{equation*}
Therefore, 
\begin{equation*}
\begin{split}
&\frac{d}{dt}\int_{0}^{1}(\frac{1}{2\epsilon}w^2+ww_t+\frac{1}{2}w_t^2+\frac{a^2}{2}w_x^2)dx+(\frac{1}{\epsilon}-1)\int_{0}^{1}w_t^2dx+a^2\int_{0}^{1}w_x^2dx\\
&=-\int_0^1a^2\bar{u}_x w_x dx+\int_0^1a^2\bar{u}_{xx}w_t dx\\
& \leq \frac{a^2}{2} \int_0^1 \bar{u}_x^2 dx+ \frac{a^2}{2} \int_0^1 w_x^2 dx+\int_{0}^{1}(\frac{1}{2}w_t^2+\frac{a^4}{2}\bar u_{xx}^2) dx.
\end{split}
\end{equation*}
Hence, 
\begin{equation}\label{energyiequality1}
\begin{split}
&\frac{d}{dt}\int_{0}^{1}(\frac{1}{2\epsilon}w^2+ww_t+\frac{1}{2}w_t^2+\frac{a^2}{2}w_x^2)dx+(\frac{1}{\epsilon}-\frac{3}{2})\int_{0}^{1}w_t^2dx+\frac{a^2}{2}\int_{0}^{1}w_x^2dx\\
& \leq \int_0^1 (\frac{a^2}{2}\bar{u}_x^2+\frac{a^4}{2}\bar u_{xx}^2) dx.
\end{split}
\end{equation}
Note that since
\begin{equation}
    -(w^2+\frac{1}{4}w_t^2)\leq ww_t \leq w^2+\frac{1}{4}w_t^2,
\end{equation}
\begin{equation*}
\frac{1}{2\epsilon}w^2+w w_t+\frac{1}{2}w_t^2 \geq (\frac{1}{2\epsilon}-1)w^2+\frac{1}{4}w_t^2.
\end{equation*}
Integrate \eqref{energyiequality1} with respect to time to get 
\begin{equation*}
\begin{split}
&\int_0^1((\frac{1}{2\epsilon}-1)w^2+\frac{1}{4}w_t^2+\frac{a^2}{2}w_x^2)(x,t)dx +(\frac{1}{\epsilon}-\frac{3}{2})\int_0^t\int_0^1w_t^2(x, s) dx ds+\frac{a^2}{2}\int_0^t\int_0^1w_x^2 (x, s) dxds\\
&\leq\int_0^1((\frac{1}{2\epsilon}+1)w^2+\frac{3}{4}w_t^2+\frac{a^2}{2}w_x^2)(x,0)dx+\int_0^t \int_0^1(\frac{a^2}{2}\bar{u}_x^2+\frac{a^4}{2}\bar{u}_{xx}^2)(x, s)dxds.
\end{split}
\end{equation*} 
By the initial condition $w(x, 0)=w_x(x, 0)=0$ and $w_t(x, 0)=-g'(x)$, we have
\begin{equation*}
\begin{split}
&\int_0^1(\frac{1}{2\epsilon}-1)w^2(x,t)dx+(\frac{1}{\epsilon}-\frac{3}{2})\int_0^t \int_0^1 w_t^2 dxds\\
&\leq \int_0^1 \frac{3}{4}(g'(x))^2(x,0)dx+\int_0^t\int_0^1\frac{a^2}{2}(\bar{u}_x^2+\frac{a^4}{2}\bar{u}_{xx}^2)(x, s)dxds
\end{split}
\end{equation*}
This proves \eqref{10171} and completes the proof of Theorem \ref{thm3.1}. \end{proof}

\subsection{The case for $b<0$} 
\includegraphics[width=\textwidth]{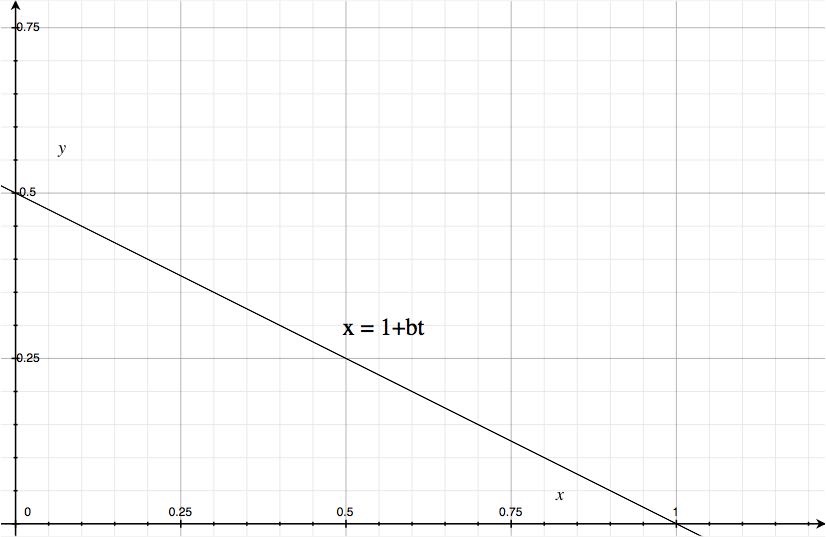}

In this subsection, we present the analysis for the case when $b<0$ by using the boundary layer profile and the energy method. When $b<0$, the boundary layer occurs at the boundary $x=0$. The case for $b>0$ can be handled similarly for which the boundary layer occurs at $x=1$. 
Denote the solution at equilibrium as $u^e(x,t)$, then it satisfies the following equations:
\begin{align}
&\partial_t u^e + b\partial_x u^e = 0, \\
&u^e(1,t) = 0,\\
&u^e(x,0) = f(x). 
\end{align}
Solving for $u^e$, we have:
\begin{equation}\label{ue}
u^e(x,t) =  \Big \{
\begin{aligned}
f(x-bt), \quad&x \leq 1+bt, \\
0, \quad &x > 1+bt.
\end{aligned}
\Big.
\end{equation} 

The solution $u^e$ is illustrated in the above figure. 
Taking $a = 1$ for convenience, we may write the problem \eqref{eq:1}, \eqref{bc} and \eqref{initial} as 
\begin{align}
&u_{tt}^\epsilon - u_{xx}^\epsilon + \frac{1}{\epsilon}(u_{t}^\epsilon+bu_{x}^\epsilon) = 0,\label{uepsilon}\\
&u^\epsilon(0,t) = u^\epsilon(1,t)=0,\\
&u^\epsilon(x,0) = f(x),\\
&u_t^\epsilon(x,0) = -g'(x),
\end{align}
where we have used $u^{\epsilon}$ to indicate the dependence of the solution on $\epsilon$. 

Boundary layer expansion:
\begin{equation}\label{ubnd}
u^\epsilon(x,t) = u^e(x,t) + U_0(y,t) + \epsilon U_1(y,t) + w(x,t),
\end{equation}
where $y = \frac{x}{\epsilon}.$ 

Plug (\ref{ubnd}) in (\ref{uepsilon}) to obtain:
\begin{equation}\label{ubnd2}
\begin{split}
&\partial_t^2 u^e - \partial_x^2 u^e + \frac{1}{\epsilon}(\partial_t u^e + b\partial_x u^e)\\
&+\partial_t^2 U_0 - \frac{1}{\epsilon^2}\partial_y^2 U_0 + \frac{1}{\epsilon}(\partial_t U_0 + \frac{b}{\epsilon}\partial_y U_0)\\
&+\epsilon[\partial_t^2 U_1 - \frac{1}{\epsilon^2}\partial_y^2 U_1 + \frac{1}{\epsilon}(\partial_t U_1 + \frac{b}{\epsilon}\partial_y U_1)]\\
&=\partial_t^2 w - \partial_x^2 w + \frac{1}{\epsilon}(\partial_t w + b\partial_x w) = 0.
\end{split}
\end{equation}

To make $O(\epsilon^{-2})$ and $O(\epsilon^{-1})$ in the above equation be of order 0, we have:
\begin{align}
&O(\epsilon^{-2}): -\partial_y^2 U_0 + b\partial_y U_0 = 0,\label{o2}\\
&O(\epsilon^{-1}): \partial_t U_0 - \partial_y^2 U_1 + b\partial_y U_1 = 0.\label{o1}
\end{align}

After solving for (\ref{o2}) and (\ref{o1}), we take 
\begin{align}
&U_0(y,t) = c(t)e^{by},\label{U0}\\
&U_1(y,t) = \frac{c'(t)}{b}[(y-\frac{1}{b})e^{by}+\frac{1}{b}] + \frac{d(t)}{b}(e^{by}-1).\label{U1}
\end{align}

Then (\ref{ubnd2}) becomes:
\begin{equation}
\begin{split}
\partial_t^2 u^e - \partial_x^2 u^e + \partial_t^2 U_0 + \epsilon \partial_t^2 U_1 
+\partial_t U_1 + \partial_t^2 w - \partial_x^2 w\\ + \frac{1}{\epsilon}(\partial_t w + b\partial_x w) = 0.\label{ubnd3}
\end{split}
\end{equation}
Recall that, from (\ref{ubnd}),
\begin{equation}
w  = u^{\epsilon} - U_0 - \epsilon U_1 - u^e.
\end{equation}
We want $w(0,t) = w(1,t) = 0.$ Since $u^{\epsilon}(0,t) = u^{\epsilon}(1,t) = 0$ and
\begin{equation}
u^e(1,t) = 0,
\end{equation}
\begin{equation}
u^e(0,t) = \Big \{
\begin{aligned}
f(-bt), \quad &t \leq \frac{-1}{b},\\
0, \qquad &t > \frac{-1}{b},
\end{aligned}
\end{equation}

to make w(0,t) = w(1,t) = 0, the following equations must be satisfied:
\begin{align}
&(U_0+\epsilon U_1)(0,t) = -u^e(0,t)\label{biguleft}\\
&(U_0+\epsilon U_1)(1,t) = 0\label{biguright}
\end{align}

Solving for (\ref{biguleft}) and (\ref{biguright}), we have
\begin{equation}
c(t) = \Big \{
\begin{aligned}
-f(-b t), \quad &t \leq \frac{-1}{b}\\
0, \qquad &t > \frac{-1}{b}
\end{aligned}
\end{equation}

\begin{equation}
d(t) = \frac{b c(t)\frac{e^{\frac{b}{\epsilon}}}{\epsilon}+c'(t)[(\frac{1}{\epsilon}-\frac{1}{b})e^{\frac{b}{\epsilon}}+\frac{1}{b}]}{1-e^{\frac{b}{\epsilon}}}\label{dt}
\end{equation}
Also, we can see that 
\begin{equation}
\lim_{\epsilon \rightarrow 0}d(t) = \frac{c'(t)}{b}
\end{equation}

Consider (\ref{ubnd3}). Now we have:

\begin{equation}
\partial_t^2 w - \partial_x^2 w + \frac{1}{\epsilon}(\partial_t w + b\partial_x w) + (\partial_t^2 u^e - \partial_x^2 u^e) + (\epsilon \partial_t^2 U_1 + \partial_t U_1 + \partial_t^2 U_0) = 0 \label{weqn}
\end{equation}

with boundary conditions
\begin{equation}
w(0,t) = w(1,t) = 0\label{wbc}
\end{equation}

and initial conditions
\begin{align}
w(x,0) &= u^{\epsilon}(x,0) - u^e(x,0) - U_0(\frac{x}{\epsilon},0) - \epsilon U_1(\frac{x}{\epsilon},0)\nonumber \\ 
& = -\epsilon U_1(\frac{x}{\epsilon},0),\label{wic}\\
w_t(x, 0) &= u_{t}^{\epsilon} - u_{t}^{e} - U_{1t} - U_{0t}.\label{wtic}
\end{align}
From the definitions of $u^{e}$, $U_0$ and $U_1$ in (\ref{ue}), (\ref{U0}), and (\ref{U1}), it is easy to see that
\begin{align}\label{initialestimates} 
&|w(x, 0)|\le C\epsilon |f'(0)|, \\\notag
 &|w_t(x, 0)|\le C\epsilon (|f'(0)|+|f^{"}(0)|)+C(|g'(x)|+|b||f'(x)|),\\\notag
 &|w_x(x, 0)|\le C|f'(0)|\notag ,\end{align}
 where and in the following, we use $C$ to denote a generic constant independent of $\epsilon$. 
 
 \begin{theorem}\label{mainthm}
 Suppose that $f\in C^3([0,1])$ and $f'(0)=1$. Let $w$ be the solution to problem \eqref{weqn}, \eqref{wbc}, \eqref{wic} and \eqref{wtic}. Then it holds that
\begin{align}\label{wbnd3}
&\int_0^1 w^2(x, t)dx+\int_0^t\int_0^1 w^2(x,s) dxds\notag\\
&\le C\epsilon^2 t\left(\int_0^1((g'(x))^2+b^2|f'(x)|^2)dx+ (|f'(0)|^2+|f^{''}(0)|^2)\right)\notag\\
&+C\epsilon t^2 \max_{[0, 1]}\sum_{i=1}^3|f^{(i)}(x)|^2.
\end{align} 
\end{theorem}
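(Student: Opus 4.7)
The plan is to adapt the combined $L^2$-energy method used in the proof of Theorem \ref{thm3.1} to the present $b<0$ setting. Two new difficulties arise relative to the $b=0$ case: the extra convection term $\frac{b}{\epsilon}w_x$ in \eqref{weqn} produces a cross term $\frac{b}{\epsilon}\int w_xw_t\,dx$ when we test with $w_t$, and the source $F:=(\partial_t^2u^e-\partial_x^2u^e)+(\epsilon\partial_t^2 U_1+\partial_t U_1+\partial_t^2 U_0)$ is not a classical $L^2$ function (the equilibrium residual is singular along the characteristic $x=1+bt$ whenever $f'(1)\ne 0$, and the boundary-layer correctors have only $O(\sqrt\epsilon)$ $L^2$ mass).

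The first step is to perform the two standard energy identities. Multiplying \eqref{weqn} by $w$ and integrating over $[0,1]$ (using $w(0,t)=w(1,t)=0$, so $\int_0^1 ww_x\,dx=\tfrac12[w^2]_0^1=0$ and the convection term drops out), one obtains
\[
\frac{d}{dt}\int_0^1\!\!\left(ww_t+\frac{w^2}{2\epsilon}\right)dx+\int_0^1 w_x^2\,dx=\int_0^1 w_t^2\,dx-\int_0^1 Fw\,dx,
\]
while multiplying by $w_t$ yields
\[
\frac{1}{2}\frac{d}{dt}\int_0^1(w_t^2+w_x^2)\,dx+\frac{1}{\epsilon}\int_0^1 w_t^2\,dx+\frac{b}{\epsilon}\int_0^1 w_xw_t\,dx=-\int_0^1 Fw_t\,dx.
\]
The cross term is handled by the weighted Young inequality $\big|\tfrac{b}{\epsilon}w_xw_t\big|\le\tfrac{b^2}{2\eta\epsilon}w_x^2+\tfrac{\eta}{2\epsilon}w_t^2$, followed by the combination $\mu\cdot(\text{first})+(\text{second})$ with $\mu=\tfrac{b^2}{2\eta\epsilon}$, so that the two $\int w_x^2$ contributions cancel. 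The leftover coefficient of $\int w_t^2$ on the dissipation side is $\tfrac{-\eta^2+2\eta-b^2}{2\eta\epsilon}$, positive on $\eta\in(1-\sqrt{1-b^2},\,1+\sqrt{1-b^2})$; this interval is nonempty precisely because of the subcharacteristic condition $|b|<a=1$. Taking for instance $\eta=1$ (hence $\mu=\tfrac{b^2}{2\epsilon}$), a completing-the-square computation shows that the resulting functional $E(t)$ is coercive, namely $E(t)\ge\tfrac{b^2}{8\epsilon^2}\int w^2\,dx+\tfrac{1-b^2}{2}\int w_t^2\,dx+\tfrac{1}{2}\int w_x^2\,dx$.

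Next I would estimate the forcing. Since $\partial_t^2u^e-\partial_x^2u^e=(b^2-1)\partial_x^2u^e$ (using $\partial_tu^e=-b\partial_xu^e$) is only a measure along $x=1+bt$, I would not take its $L^2$ norm directly; instead, in the pairings $\int Fw\,dx$ and $\int Fw_t\,dx$, shift one $x$-derivative onto the test function by integration by parts (boundary contributions vanish because $w,w_t$ vanish at $x=0,1$), obtaining $-(b^2-1)\int u^e_x w_x\,dx$ and $-(b^2-1)\int u^e_x w_{xt}\,dx$; the latter is further integrated by parts in $t$ to produce a total derivative that is absorbed into $E(t)$ plus a regular remainder, reducing the equilibrium part of the source norm to $\int(u^e_x)^2\,dx\le\int(f')^2\,dx$. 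For the boundary-layer correctors, the rescaling $y=x/\epsilon$ together with the decay $e^{by}$ for $b<0$ gives $\int_0^1(\partial_t^j U_0)^2\,dx=O(\epsilon)$ and the same for the $y$-decaying portions of $\partial_t U_1,\epsilon\partial_t^2 U_1$. The only constant-in-$y$ pieces of $\partial_t U_1$ and $\partial_t^2 U_1$ are $\tfrac{c''(t)}{b^2}-\tfrac{d'(t)}{b}$ and its $t$-derivative; by formula \eqref{dt} and the limit $d(t)\to c'(t)/b$, both are $O\!\bigl(e^{b/\epsilon}/\epsilon\bigr)$, i.e.\ exponentially small.

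Finally, combining these bounds produces a Gronwall-type differential inequality $\tfrac{dE}{dt}\le C\epsilon\,E+G(t)$, with $G(t)$ controlled by $\int\bigl((g')^2+b^2(f')^2\bigr)dx+|f'(0)|^2+|f''(0)|^2+\max_{[0,1]}\sum_{i=1}^3|f^{(i)}|^2$. The initial energy $E(0)$ is $O(1)$ thanks to \eqref{initialestimates}: the large prefactor $\tfrac{b^2}{4\epsilon^2}$ in front of $\int w^2(\cdot,0)\,dx$ is balanced by the $O(\epsilon)$ bound on $w(x,0)$. Integrating gives $E(t)\le C(E(0)+tG)$, and the coercivity $E(t)\ge \tfrac{b^2}{8\epsilon^2}\int w^2\,dx$ then yields the pointwise bound $\int w^2(\cdot,t)\,dx\le C\epsilon^2 t(\cdot)+C\epsilon t^2(\cdot)$; a further integration in $s$ produces the $\int_0^t\!\!\int w^2\,dxds$ contribution and completes \eqref{wbnd3}. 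The principal obstacle throughout is the absorption of $\tfrac{b}{\epsilon}\int w_xw_t$, for which the strict subcharacteristic inequality $|b|<a$ is indispensable; the loss of regularity of $u^e$ at $x=1+bt$ is a secondary nuisance resolved by integration by parts.
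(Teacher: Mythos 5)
Your skeleton coincides with the paper's: test \eqref{weqn} with $w_t$ and with $w$, combine them with an $O(1/\epsilon)$ weight, use the subcharacteristic condition $|b|<1$ to make the quadratic form in $(w_x,w_t)$ nonnegative (your admissible interval for $\eta$ is exactly the paper's interval for $\epsilon k$ before it fixes $k=\tfrac{1}{2\epsilon}$), bound the source, and close with Gronwall; your coercivity claim $E(t)\ge\tfrac{b^2}{8\epsilon^2}\int w^2+\tfrac{1-b^2}{2}\int w_t^2+\tfrac12\int w_x^2$ checks out, as does $E(0)=O(1)$ via \eqref{initialestimates}. The genuine gap is the closure step ``$\tfrac{dE}{dt}\le C\epsilon E+G(t)$ with $G(t)$ controlled by the data norms.'' The problematic term is the $1/\epsilon$-weighted pairing $\mu\int_0^1(\text{source})\,w\,dx$, $\mu=\tfrac{b^2}{2\eta\epsilon}$, applied to the equilibrium residual $(b^2-1)\partial_x^2u^e$, which has \emph{no} smallness in $\epsilon$ (unlike the boundary-layer correctors, where your $O(\epsilon)$ $L^2$ bounds are fine). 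There is no dissipation in $w$ itself; your choice $\eta=1$ cancels the $\int w_x^2$ dissipation exactly, so after your integration by parts the term $\tfrac{C}{\epsilon}\bigl|\int u^e_xw_x\,dx\bigr|$ has no $\epsilon^{-1}\int w_x^2$ term to be absorbed into, and Young against the coercive part only yields $C\epsilon E+\tfrac{C}{\epsilon}\int(u^e_x)^2dx$, i.e.\ an $O(1/\epsilon)$ forcing, not $O(1)$; the same happens if you do not integrate by parts and pair $u^e_{xx}$ with $w$ directly. This $1/\epsilon$ loss is exactly why the paper does not attempt an order-one Gronwall forcing: in \eqref{emth2}--\eqref{wbnd} it keeps the term $\tfrac{1}{4\epsilon}\int_0^t\!\int w^2$ on the right, uses the crude bound \eqref{kk}, and runs Gronwall on $F(t)=\int_0^t\!\int_0^1w^2$, which is precisely the origin of the $C\epsilon t^2$ term in \eqref{wbnd3}. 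As written, your final scalings are asserted rather than derived: if your inequality with $E(0)=O(1)$ and $G=O(1)$ did hold, the conclusion would be $\int w^2\le C\epsilon^2(1+t)$, not the stated $C\epsilon^2 t+C\epsilon t^2$, so the step from your Gronwall to \eqref{wbnd3} is not actually carried out.

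A secondary point: your observation that $\partial_x^2u^e$ carries a measure along $x=1+bt$ when $f'(1)\ne0$ is a legitimate criticism of the paper's own bound \eqref{kk}, which silently treats $u^e$ in \eqref{ue} as piecewise $C^2$ without jumps in $u^e_x$. But your repair does not close either: after moving one $x$-derivative onto $w_t$ you integrate by parts in $t$, and the remainder involves $u^e_{xt}=-b\,\partial_x^2u^e$ paired with $w_x$, i.e.\ effectively a trace of $w_x$ on the characteristic, which the energy does not control; the singularity is reintroduced, not removed. The consistent (if less ambitious) route is the paper's: impose enough compatibility at $x=1$ that $G\in L^\infty_tL^2_x$ with $\int_0^1G^2\le C\max_{[0,1]}\sum_{i=1}^3|f^{(i)}|^2$, keep the $\tfrac{1}{4\epsilon}\int\!\int w^2$ term, and Gronwall $F(t)$; your sharper $O(\epsilon)$ estimates on the boundary-layer part of the source are correct but unnecessary for \eqref{wbnd3}.
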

\begin{remark} It is easy to verify that $\int_0^1 U_0^2(\frac{x}{\epsilon}, t)dx\le O(1)\epsilon$ and $\int_0^1\epsilon^2 U_1^2(\frac{x}{\epsilon}, t)dx\le O(1)\epsilon^2. $ It follows from 
\eqref{wbnd3} that, for any fixed $t>0$, $\int_0^1 (u^{\epsilon}-u^e)^2(x, t)dx $ converges to zero in the order of $\epsilon$ as 
$\epsilon\to 0$. \end{remark}

\noindent {\it Proof of Theorem \ref{mainthm}}

Multiply (\ref{weqn}) by $w_t$, then integrate both sides on (0,1) with respect to x:

\begin{multline}
\frac{d}{dt}\int_0^1 (\frac{w_t^2}{2}+\frac{w_x^2}{2})dx + \frac{1}{\epsilon}\int_0^1 w_t^2 dx + \frac{b}{\epsilon}\int_0^1 w_xw_t dx\\
= - \int_0^1 (\partial_t^2 u^e - \partial_x^2 u^e)w_t dx - \int_0^1(\epsilon \partial_t^2 U_1 + \partial_t U_1 + \partial_t^2 U_0)w_tdx.\label{mwt}
\end{multline}

Similarly, multiply (\ref{weqn}) by $w$, then integrate both sides on (0,1) with respect to x:

\begin{multline}
\frac{d}{dt}\int_0^1 (ww_t+\frac{1}{2\epsilon}w^2)dx+\int_0^1(w_x^2-w_t^2)dx\\
=- \int_0^1 (\partial_t^2 u^e - \partial_x^2 u^e)w dx - \int_0^1(\epsilon \partial_t^2 U_1 + \partial_t U_1 + \partial_t^2 U_0)wdx.\label{mw}
\end{multline}

Denote
\begin{equation}
G(x,t) = \left[(\partial_t^2 u^e(x,t)- \partial_x^2 u^e(x,t))\right] + \left[\epsilon \partial_t^2 U_1(\frac{x}{\epsilon},t)+ \partial_t U_1(\frac{x}{\epsilon},t) + \partial_t^2 U_0(\frac{x}{\epsilon},t)\right].
\end{equation}

Then $(\ref{mwt}) + (\ref{mw})\times k$ for a constant $k$ to be determined later  yields that
\begin{multline}
\frac{d}{dt}\int_0^1 (\frac{w_t^2+w_x^2}{2}+kww_t+\frac{kw^2}{2\epsilon})dx
+\int_0^1 (\frac{1}{\epsilon}-k)w_t^2dx
+\frac{b}{\epsilon}\int_0^1 w_xw_t dx + k\int_0^1 w_x^2 dx \\
= - \int_0^1 Gw_t dx - k\int_0^1 Gw dx. \label{emth}
\end{multline}

The left hand side of (\ref{emth}) can be written as :
\begin{align}
L.H.S=&\frac{d}{dt}\int_0^1 \{\frac{1}{2}[(w_t+kw)^2+(\frac{k}{\epsilon}-k^2)w^2]+\frac{1}{2}w_x^2\}dx \nonumber \\
&+ \int_0^1 k(w_x+\frac{b}{2\epsilon k}w_t)^2 + (\frac{1}{\epsilon}-k-\frac{b^2}{4\epsilon^2 k})w_t^2 dx.
\end{align}

In order to have positive coefficients, we have:
\begin{align}
\frac{k}{\epsilon}-k^2 &> 0,\\
\frac{1}{\epsilon}-k-\frac{b^2}{4\epsilon^2 k}&>0,\\
k&>0.
\end{align}

Solving for the above conditions, we obtain:
\begin{equation}
k \in (\frac{1-\sqrt{1-b^2}}{2\epsilon},\frac{1+\sqrt{1-b^2}}{2\epsilon}).
\end{equation}

Thus we take 
\begin{equation}
k = \frac{1}{2\epsilon}.
\end{equation}

Then (\ref{emth}) becomes:
\begin{multline}
\frac{1}{2}\frac{d}{dt}\int_0^1 [(w_t+\frac{w}{2\epsilon})^2+\frac{1}{4\epsilon^2}w^2+w_x^2]dx + \int_0^1 [\frac{1}{2\epsilon}(w_x+bw_t)^2+\frac{1-b^2}{2\epsilon}w_t^2]dx\\
=-\int_0^1 (Gw_t+\frac{Gw}{2\epsilon})dx.\label{emth2}
\end{multline}

Integrate both sides of (\ref{emth2}) on (0,t) with respect to time:
\begin{multline}
\frac{1}{2}\int_0^1 [(w_t+\frac{w}{2\epsilon})^2+\frac{1}{4\epsilon^2}w^2+w_x^2](x,t)dx + \int_0^t \int_0^1 [\frac{1}{2\epsilon}(w_x+bw_t)^2+\frac{1-b^2}{2\epsilon}w_t^2]dxds\\
=\frac{1}{2}\int_0^1 [(w_t+\frac{w}{2\epsilon})^2+\frac{1}{4\epsilon^2}w^2+w_x^2](x,0)dx-\int_0^t \int_0^1 (Gw_t+\frac{Gw}{2\epsilon})dxds.\label{emth3}
\end{multline}

By Cauchy-Schwarz inequality:
\begin{equation}
|\int_0^t \int_0^1 Gw_t dxds| \leq \frac{1-b^2}{4\epsilon}\int_0^t \int_0^1 w_t^2 dxds + \frac{\epsilon}{1-b^2}\int_0^t \int_0^1 G^2 dxds.
\end{equation}

Then (\ref{emth3}) becomes
\begin{multline}
\frac{1}{2}\int_0^1 [(w_t+\frac{w}{2\epsilon})^2+\frac{1}{4\epsilon^2}w^2+w_x^2](x,t)dx + \int_0^t \int_0^1 [\frac{1}{2\epsilon}(w_x+bw_t)^2+\frac{1-b^2}{4\epsilon}w_t^2]dxds\\
 \leq \frac{1}{2}\int_0^1 [(w_t+\frac{w}{2\epsilon})^2+\frac{1}{4\epsilon^2}w^2+w_x^2](x,0)dx-\int_0^t \int_0^1 \frac{Gw}{2\epsilon}dxds + \frac{\epsilon}{1-b^2} \int_0^t \int_0^1 G^2 dxds.
\end{multline}
With the initial conditions (\ref{wic}), (\ref{wtic}), we have
\begin{multline}
\frac{1}{2}\int_0^1 [(w_t+\frac{w}{2\epsilon})^2+\frac{1}{4\epsilon^2}w^2+w_x^2](x,t)dx + \int_0^t \int_0^1 [\frac{1}{2\epsilon}(w_x+bw_t)^2+\frac{1-b^2}{4\epsilon}w_t^2]dxds\\
 \leq C \int_0^1 (w_t^2+\frac{1}{\epsilon^2}w^2+w_x^2)(x, 0) dx-\int_0^t \int_0^1 \frac{Gw}{2\epsilon}dxds + \frac{\epsilon}{1-b^2} \int_0^t \int_0^1 G^2 dxds.\label{emth4}
\end{multline}

Again, by Cauchy-Schwarz inequality,
\begin{equation}
\left|\int_0^t\int_0^1 \frac{Gw}{2\epsilon} dxds\right| \leq \frac{1}{4\epsilon}\int_0^t\int_0^1(w^2+G^2)dxds .\label{gwdi} 
\end{equation}
From (\ref{gwdi}) and (\ref{emth4}), we have
\begin{align}
&\frac{1}{8\epsilon^2}\int_0^1 w^2(x,t) dx\notag\\
& \leq C\int_0^1 (w_t^2+\frac{1}{\epsilon^2}w^2 + w_x^2)(x, 0) dx + \frac{1}{4\epsilon}\int_0^t\int_0^1w^2dxds+ \frac{1}{2\epsilon}\int_0^t\int_0^1 G^2dxds\label{wbnd}
\end{align}
for small $\epsilon$. 

From the definition of $G$, we can see that, 
\begin{equation}\label{kk}\int_0^1 G^2(x,t) dx\le C\max_{[0, 1]}\sum_{i=1}^3|f^{(i)}(x)|^2. \end{equation}

Denote
\begin{equation}
F(t)=\int_0^t\int_0^1 w^2 dxds.
\end{equation}
In view of \eqref{initialestimates}, and \eqref{kk}, 
(\ref{wbnd}) becomes
\begin{align}
&F'(t)-2\epsilon F(t)\notag\\
& \leq C\epsilon^2 \int_0^1 (w_t^2+\frac{1}{\epsilon^2}w^2+w_x^2)(x, 0) dx + 4\epsilon\int_0^t\int_0^1 G^2dxds\notag\\
&\le C\epsilon^2\left(\int_0^1((g'(x))^2+b^2|f'(x)|^2)dx+ (|f'(0)|^2+|f^{''}(0)|^2)\right)\notag\\
&+C\epsilon t \max_{[0, 1]}\sum_{i=1}^3|f^{(i)}(x)|^2.
\label{wbnd1}
\end{align}
for small $\epsilon$. 

Multiply this by $e^{-2 \epsilon t}$ on both sides, we obtain,
\begin{align}
&(e^{-2\epsilon t}F(t))'\notag\\
&\leq C\epsilon^2 e^{-2\epsilon t} \left(\int_0^1((g'(x))^2+b^2|f'(x)|^2)dx+ (|f'(0)|^2+|f^{''}(0)|^2)\right)\notag\\
&+C\epsilon t e^{-2\epsilon t}\max_{[0,1]}\sum_{i=1}^3|f^{(i)}(x)|^2,\\
\end{align}
Integrate the above from $0$ to $t$ and multiply both sides by $e^{2\epsilon t}$,
\begin{align}
&e^{-2\epsilon t}F(t) \notag\\
&\leq C\epsilon(e^{-2\epsilon t}-1)\left(\int_0^1((g'(x))^2+b^2|f'(x)|^2)dx+ (|f'(0)|^2+|f^{''}(0)|^2)\right)\notag\\
&+C\epsilon\left(\frac{t e^{-2\epsilon t}}{2\epsilon} + \frac{e^{-2\epsilon t}}{4 \epsilon^2}-\frac{1}{4\epsilon^2}\right)\max_{[0,1]}\sum_{i=1}^3 |f^{(i)}(x)|^2,\notag\\
&F(t) \notag\\
&\leq C\epsilon(1-e^{2\epsilon t})\left(\int_0^1((g'(x))^2+b^2|f'(x)|^2)dx+ (|f'(0)|^2+|f^{''}(0)|^2)\right)\notag\\
&+C\epsilon\left(\frac{t}{2\epsilon} + \frac{1}{4 \epsilon^2}-\frac{e^{2\epsilon t}}{4\epsilon^2}\right)\max_{[0,1]}\sum_{i=1}^3 |f^{(i)}(x)|^2.\notag\label{wbnd4}\\
\end{align}
Do Taylor expansion for $e^{2\epsilon t}$, 
\begin{equation}
    1-e^{2\epsilon t}=O(\epsilon t),
\end{equation}
\begin{equation}
    \frac{t}{2\epsilon} + \frac{1}{4 \epsilon^2}-\frac{e^{2\epsilon t}}{4\epsilon^2} =\frac{t}{2\epsilon} + \frac{1}{4 \epsilon^2}-\frac{1+2\epsilon t+O((2\epsilon t)^2)}{4\epsilon^2} = O(t^2).
\end{equation}
Plugging the above equations into \eqref{wbnd4} , we arrive at
\begin{align}
&F(t)\notag\\
&\le C \epsilon^2 t\left(\int_0^1((g'(x))^2+b^2|f'(x)|^2)dx+ (|f'(0)|^2+|f^{''}(0)|^2)\right)\notag\\
&+C\epsilon t^2 \max_{[0, 1]}\sum_{i=1}^3|f^{(i)}(x)|^2.
\label{wbnd2}
\end{align}
This proves \eqref{wbnd3}. 
\section{Numerical Results}
We obtained the results from the Fourier solutions in Theorem~\ref{thm2.1} with $f(x)=\sin(\pi x)$, $g'(x) = -\pi \sin(\pi x)$, $\epsilon=0.01$, $a=2$ and $b = 1$. The horizontal axis in the following figures stands for $x$ and the vertical axis stands for $u$.\\
\includegraphics[width=\textwidth]{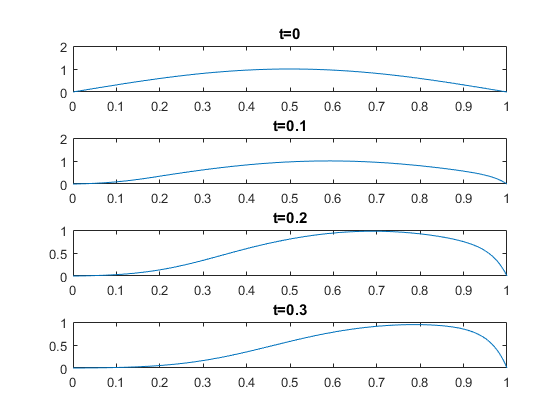}
\includegraphics[width=\textwidth]{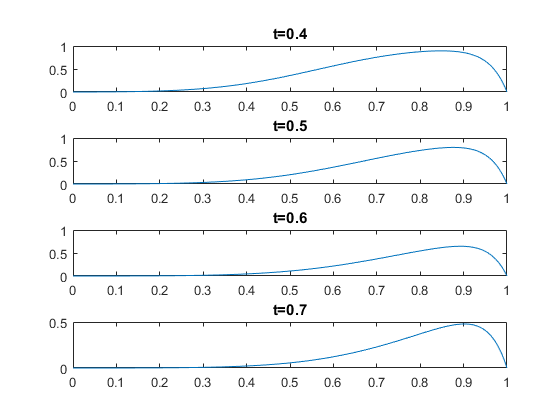}
\includegraphics[width=\textwidth]{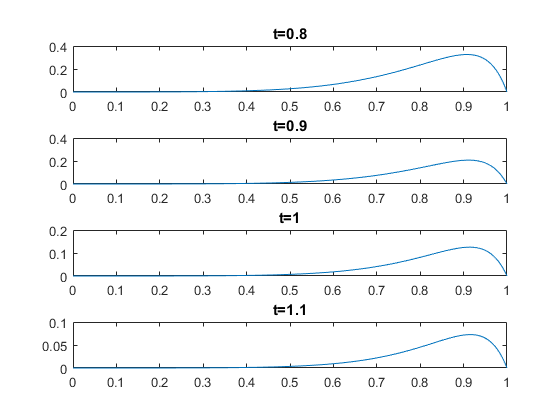}
\includegraphics[width=\textwidth]{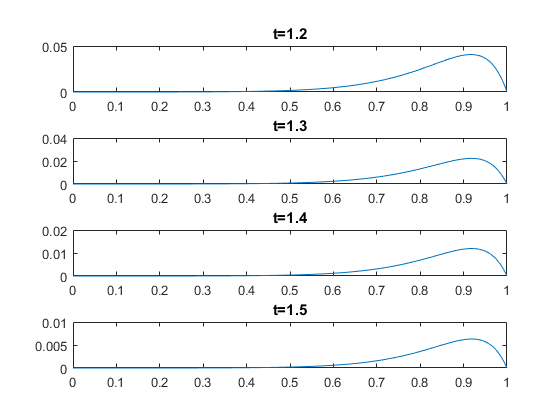}
\includegraphics[width=\textwidth]{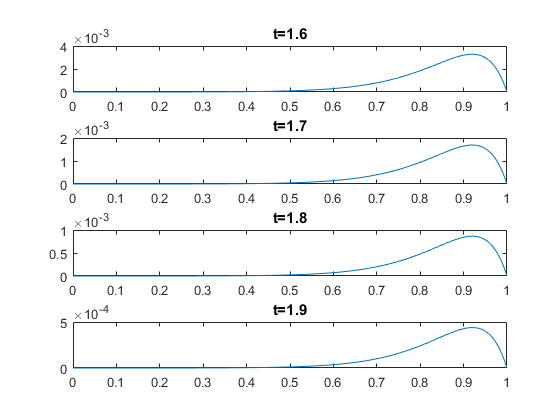}

\centerline{\bf Ackowledgements} 
The results obtained in this project were under the supervision of  Professor Tao Luo at every stage, to whom the authors are greatly grateful. 
\bibliographystyle{plain}
\bibliography{reference}
CEN Luyu: luyucen2-c@my.cityu.edu.hk \\ 
LIN Lu:   lulin22-c@my.cityu.edu.hk  \\ 
LIU Jiyuan: jiyuanliu2-c@my.cityu.edu.hk \\ 
XIAO Yujie: yujiexiao3-c@my.cityu.edu.hk

\end{document}